\title{Essential dimension of the spin groups in characteristic 2}
\author{Burt Totaro}
\date{  }
\def\Z{\text{\bf Z}}
\def\P{\text{\bf P}}
\def\F{\text{\bf F}}
\DeclareMathOperator{\et}{et}
\def\arrow{\rightarrow}
\def\inj{\hookrightarrow}
\DeclareMathOperator{\Br}{Br}
\DeclareMathOperator{\Spin}{Spin}
\DeclareMathOperator{\HSpin}{HSpin}
\DeclareMathOperator{\ed}{ed}
\DeclareMathOperator{\ind}{ind}
\DeclareMathOperator{\PGO}{PGO}
\begin{document}
\maketitle
\newtheorem{theorem}{Theorem}[section]
\newtheorem{proposition}[theorem]{Proposition}
\newtheorem{corollary}[theorem]{Corollary}
\newtheorem{lemma}[theorem]{Lemma}

\theoremstyle{definition}
\newtheorem{definition}[theorem]{Definition}
\newtheorem{example}[theorem]{Example}

\theoremstyle{remark}
\newtheorem{remark}[theorem]{Remark}

The essential dimension of an algebraic group $G$ is a measure
of the number of parameters needed to describe
all $G$-torsors over all fields. A major achievement
of the subject was the calculation of the essential
dimension of the spin groups over a field of characteristic not 2,
started by Brosnan, Reichstein, and Vistoli, and completed
by Chernousov, Merkurjev, Garibaldi, and Guralnick
\cite{BRV, CM, GG}, \cite[Theorem 9.1]{MerkBull}.

In this paper, we determine the essential dimension of the spin group
$\Spin(n)$ for $n\geq 15$
over an arbitrary field (Theorem \ref{spin}). We find that
the answer is the same in all characteristics. In contrast,
for the groups $O(n)$ and $SO(n)$, 
the essential dimension is smaller in characteristic 2,
by Babic and Chernousov \cite{BC}.

In characteristic not 2, the computation of essential dimension
can be phrased to use
a natural finite subgroup of $\Spin(2r+1)$, namely an
extraspecial 2-group, a central extension of $(\Z/2)^{2r}$ by $\Z/2$.
A distinctive feature of the argument
in characteristic 2 is that the analogous subgroup is
a finite group scheme, a central extension of $(\Z/2)^r\times (\mu_2)^r$
by $\mu_2$, where $\mu_2$ is the group scheme of square roots of unity.

In characteristic not 2,
Rost and Garibaldi computed the essential dimension of $\Spin(n)$
for $n\leq 14$ \cite[Table 23B]{Garibaldi}, where case-by-case
arguments seem to be needed.
We show in Theorem \ref{lowdim} that for $n\leq 10$, the essential
dimension of $\Spin(n)$ is the same in characteristic 2 as in
characteristic not 2. It would be interesting to compute
the essential dimension of $\Spin(n)$
in the remaining cases, $11\leq n\leq 14$
in characteristic 2.

This work was supported by NSF grant DMS-1303105. Thanks
to Skip Garibaldi and Alexander Merkurjev for their suggestions.
Garibaldi spotted a mistake in my previous description of the finite
group scheme in the proof of Theorem \ref{spin}.

\section{Essential dimension}
\label{notation}

Let $G$ be an affine group scheme of finite type
over a field $k$. Write $H^1(k,G)$ for the set of isomorphism
classes of $G$-torsors over $k$ in the fppf topology. For $G$ smooth
over $k$, this is also the set of isomorphism classes
of $G$-torsors over $k$
in the etale topology.

Following Reichstein,
the {\it essential dimension }$\ed(G)$ is the smallest natural number $r$
such that for every $G$-torsor $\xi$ over an extension field $E$ of $k$,
there is a subfield $k\subset F\subset E$ such that $\xi$ is isomorphic
to some $G$-torsor over $F$ extended to $E$, and $F$ has transcendence degree
at most $r$ over $k$. (It is essential that $E$ is allowed to be
any extension field of $k$, not just an algebraic extension field.)
There are several survey articles on essential dimension, including
\cite{Reichstein, Merkess}.

For example, let $q_0$ be a quadratic form of dimension $n$ over a field $k$
of characteristic not 2. Then $O(q_0)$-torsors can be identified with
quadratic forms of dimension $n$, up to isomorphism.
(For convenience, we sometimes write $O(n)$ for $O(q_0)$.) Thus the
essential dimension of $O(n)$ measures the number of parameters
needed to describe all quadratic forms of dimension $n$. Indeed,
every quadratic form of dimension $n$
over a field of characteristic not 2 is isomorphic to a diagonal form
$\langle a_1,\ldots,a_n\rangle$. It follows that the orthogonal
group $O(n)$ in characteristic not 2
has essential dimension at most $n$; in fact, $O(n)$
has essential dimension equal to $n$,
by one of the first computations
of essential dimension \cite[Example 2.5]{Reichstein}.
Reichstein also showed that the connected group $SO(n)$ in characteristic
not 2
has essential dimension $n-1$ for $n\geq 3$ \cite[Corollary 3.6]{Reichstein}.

For another example, for a positive integer $n$ and any field $k$,
the group scheme $\mu_n$ of $n$th roots of unity is smooth over $k$
if and only if $n$ is invertible in $k$. Independent of that,
$H^1(k,\mu_n)$ is always isomorphic to $k^*/(k^*)^n$. From that description,
it is immediate that $\mu_n$ has essential dimension at most 1 over $k$.
It is not hard to check that the essential dimension is in fact
equal to 1.

One simple bound is that for any generically free
representation $V$ of a group scheme
$G$ over $k$ (meaning that $G$ acts freely on a nonempty open subset of $V$),
the essential dimension of $G$ is at most $\dim(V)-\dim(G)$
\cite[Proposition 5.1]{MerkBull}. It follows, for example,
that the essential dimension of any affine group scheme of finite
type over $k$ is finite.

For a prime number $p$, the $p$-essential dimension $\ed_p(G)$
is a simplified invariant, defined by ``ignoring field
extensions of degree prime to $p$''. In more detail, for a $G$-torsor
$\xi$ over an extension field $E$ of $k$, define the $p$-essential
dimension $\ed_p(\xi)$ to be the smallest number $r$ such that
there is a finite extension $E'/E$ of degree prime to $p$ such that
$\xi$ over $E'$ comes from a $G$-torsor over a subfield $k\subset F\subset E'$
of transcendence degree at most $r$ over $k$. Then the {\it $p$-essential
dimension }$\ed_p(G)$ is defined to be the supremum of the $p$-essential
dimensions of all $G$-torsors over all extension fields of $k$.

The {\it spin group }$\Spin(n)$ is the simply connected double
cover of $SO(n)$.
It was a surprise when Brosnan, Reichstein, and Vistoli showed
that the essential dimension of $\Spin(n)$ over
a field $k$ of characteristic not 2 is exponentially large,
asymptotic to $2^{n/2}$ as $n$ goes to infinity \cite{BRV}. As an application,
they showed that the number of ``parameters'' needed to describe
all quadratic forms of dimension $2r$ in $I^3$ over all fields
is asymptotic to $2^r$.

We now turn to quadratic forms over a field which
may have characteristic 2.
Define a quadratic form $(q,V)$ over a field $k$ to be {\it nondegenerate }if
the radical $V^{\perp}$ of the associated
bilinear form is 0, and {\it nonsingular }if $V^{\perp}$
has dimension at most 1 and $q$ is nonzero on any
nonzero element of $V^{\perp}$.
(In characteristic not 2, nonsingular and nondegenerate are the same.)
The orthogonal group is defined as the automorphism group scheme
of a nonsingular quadratic form \cite[section VI.23]{KMRT}. For example,
over a field $k$ of characteristic 2, the quadratic form
$$x_1x_2+x_3x_4+\cdots+x_{2r-1}x_{2r}$$
is nonsingular of even dimension $2r$, while the form
$$x_1x_2+x_3x_4+\cdots+x_{2r-1}x_{2r}+x_{2r+1}^2$$
is nonsingular of odd dimension $2r+1$, with $V^{\perp}$ of dimension 1.
The {\it split }orthogonal group over $k$ is the automorphism group
of one of these particular quadratic forms.

Babic and Chernousov computed the essential dimension
of $O(n)$
and the smooth connected subgroup $O^+(n)$ over an infinite field $k$
of characteristic 2 \cite{BC}. (We also write $SO(n)$ for $O^+(n)$
by analogy with the case of characteristic not 2, even though
the whole group $O(2r)$ is contained in $SL(2r)$ in characteristic 2.)
The answer is smaller than in characteristic not 2. Namely,
$O(2r)$ has essential dimension $r+1$ (not $2r$) over $k$. Also,
$O^+(2r)$ has essential dimension $r+1$ for $r$ even,
and either $r$ or $r+1$ for $r$ odd, not $2r-1$.
Finally, the group scheme $O(2r+1)$ has essential dimension $r+2$
over $k$, and $O^+(2r+1)$ has essential dimension $r+1$.
The lower bounds here are difficult, while the upper bounds
are straightforward. For example, to show that $O(2r)$
has essential dimension at most $r+1$ in characteristic 2,
write any quadratic form of dimension $2r$
as a direct sum of 2-dimensional
forms, thus reducing the structure group to $(\Z/2)^r\times (\mu_2)^r$,
and then use that the group $(\Z/2)^r$ has essential dimension only 1
over an infinite field of characteristic 2
\cite[proof of Proposition 13.1]{BC}.

In this paper, we determine the essential dimension of $\Spin(n)$
in characteristic 2 for $n\leq 10$ or $n\geq 15$. Surprisingly,
in view of what happens for $O(n)$ and $O^+(n)$, the results for
spin groups are the same in characteristic 2
as in characteristic not 2. For $n\leq 10$, the lower bound for
the essential dimension is proved by constructing suitable
cohomological invariants. It is not known whether a similar
approach is possible for $n\geq 15$, either in characteristic 2
or in characteristic not 2.

\section{Main result}

\begin{theorem}
\label{spin}
Let $k$ be a field. For every integer
$n$ at least 15,
the essential dimension of the split group $\Spin(n)$ over $k$
is given by:
$$\ed_2(\Spin(n))=\ed(\Spin(n))=\begin{cases}
2^{n-1}-n(n-1)/2 & \text{if $n$ is odd;}\\
2^{(n-2)/2}-n(n-1)/2 &\text{if }n\equiv 2\pmod{4};\\
2^{(n-2)/2}+2^m-n(n-1)/2 &\text{if }n\equiv 0\pmod{4},
\end{cases}$$
where $2^m$ is the largest power of 2 dividing $n$.
\end{theorem}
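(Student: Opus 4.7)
The plan is to follow the strategy developed in characteristic not 2 by Brosnan--Reichstein--Vistoli and Chernousov--Merkurjev, with the sole modification that the extraspecial 2-group $2^{1+2r}\subset\Spin(2r+1)$ used there is replaced by the finite group scheme $C$ indicated in the introduction: a central extension of $(\Z/2)^r\times(\mu_2)^r$ by $\mu_2$. I would prove the two inequalities $\ed(\Spin(n))\leq\text{RHS}$ and $\text{RHS}\leq\ed_2(\Spin(n))$; together with the automatic $\ed_2\leq\ed$, these would force equality throughout.

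For the \emph{upper bound}, I would exhibit small generically free representations. When $n$ is odd, the spin representation of dimension $2^{(n-1)/2}$ should be generically free on $\Spin(n)$ for $n\geq 15$; when $n\equiv 2\pmod 4$, a half-spin representation of dimension $2^{(n-2)/2}$ is faithful and generically free; when $n\equiv 0\pmod 4$, the half-spin representation has nontrivial kernel, and I would first bound $\ed(\HSpin(n))$ from a half-spin representation of the quotient, then control the lift $1\to\mu_2\to\Spin(n)\to\HSpin(n)\to 1$ via the index of a Tits-type algebra to pick up the additional $2^m$. These generic-freeness checks should be a direct reduction modulo 2 of their characteristic-zero analogues, since the relevant orbit structure on the spin representation is defined over $\Z$.

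For the \emph{lower bound}, I would embed $C$ into $\Spin(n)$ as the preimage under $\Spin(n)\to SO(n)$ of a ``maximal 2-elementary'' diagonalizable subgroup formed by the coordinate $\mu_2$'s together with a complementary constant elementary-abelian $(\Z/2)^r$ of Weyl-type reflections; in characteristic 2 these two pieces do not merge into $(\Z/2)^{2r}$ as they do in characteristic not 2, which explains the mixed shape of $C$. I would then invoke a characteristic-$p$ analogue of the Karpenko--Merkurjev theorem on essential dimension of finite $p$-group schemes (developed by L\"otscher, MacDonald, Meyer, Reichstein, and others) to identify $\ed_2(C)$ with the minimum dimension of a representation of $C$ whose restriction to the central $\mu_2$ is faithful. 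Transporting this bound to $\Spin(n)$ uses a versal $C$-torsor together with the subtraction of $\dim\Spin(n)=n(n-1)/2$; the key compatibility is that $C$ meets the center of $\Spin(n)$ in the right way to prevent the induced $\Spin(n)$-torsor from reducing to a proper subgroup.

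The main obstacle I anticipate is the precise computation of $\ed_2(C)$, including the appearance of the extra $2^m$ term when $n\equiv 0\pmod 4$. One must classify faithful representations of the Heisenberg-type group scheme $C$ with prescribed central character: the classification should be governed by a nondegenerate alternating form on $C/Z(C)$ valued in $\mu_2$, but the dimension of a smallest such representation depends sensitively on the discrete-versus-infinitesimal split of $C/Z(C)$, and the $2^m$ correction should emerge from the index of a Tits-type algebra attached to a nontrivial $\mu_2$-character of the center of $\Spin(n)$. I expect this linear-algebra-plus-Brauer-group calculation, together with verifying that the embedding $C\hookrightarrow\Spin(n)$ realizes the required central character in each of the three congruence cases for $n$, to be the main technical heart of the proof.
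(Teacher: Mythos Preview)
Your plan is essentially the paper's proof: generically free (half-)spin representations for the upper bounds (citing Garibaldi--Guralnick, who handle all characteristics), the finite Heisenberg-type subgroup scheme $C$ for the lower bounds, and the $\HSpin$--Tits-index argument for the extra $2^m$ when $n\equiv 0\pmod 4$. Two points deserve sharpening.

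First, for the lower bound via $C$, the paper does not invoke a packaged ``Karpenko--Merkurjev for non-\'etale finite $p$-group schemes.'' Instead it assembles the conclusion from three pieces that are known to hold in the needed generality: Merkurjev's maximal-index theorem (which covers $\mu_p$ in characteristic $p$), the Karpenko--Merkurjev computation of the essential dimension of a $\mu_p$-gerbe as an index, and the inequality $\ed_2(G)\geq \ed_2$ of an associated gerbe. The representation-theoretic input is only that every representation of $C$ on which the central $\mu_2$ acts faithfully has dimension divisible by $2^r$; this comes from the free transitive action of the $(\Z/2)^r$-part on the $2^r$ characters of the $(\mu_2)$-part with nontrivial central restriction. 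You should be prepared to go through these steps rather than cite a single theorem.

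Second, and more substantively, the $2^m$ correction for $n\equiv 0\pmod 4$ does \emph{not} arise from $\ed_2(C)$. The finite subgroup $C$ (or rather its analog $G_1\subset\Spin(2r)$) only yields $\ed_2(\Spin(n))\geq 2^{(n-2)/2}-n(n-1)/2$, the same bound as for $n\equiv 2\pmod 4$. The sharper bound comes from a separate argument using the full center $Z\cong\mu_2\times\mu_2$ of $\Spin(n)$: one constructs (via Merkurjev's maximal-index results) a $\PGO^{+}(n)$-torsor over some field $L$ whose Clifford components have index $2^{(n-2)/2}$ and whose underlying algebra has index $2^m$, then applies the formula for the essential dimension of a $(\mu_2)^2$-gerbe as a sum of indices over a basis of characters. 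So the Tits-algebra index you mention is indeed the source of the $2^m$, but it enters through the center of $\Spin(n)$ and a gerbe calculation, not through the representation theory of $C$.
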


\begin{proof}
For $k$ of characteristic 0, this was proved by Chernousov
and Merkurjev, sharpening the results of Brosnan,
Reichstein, and Vistoli \cite[Theorem 2.2]{CM}.
Their argument works in any characteristic not 2, using
the results of Garibaldi and Guralnick for the upper bounds
\cite{GG}.
Namely, Garibaldi and Guralnick showed that for any field $k$
and any $n$ at least 15, $\Spin(n)$ acts generically
freely on the spin representation for $n$ odd, on each of the two half-spin
representations if $n\equiv 2\pmod{4}$, and on the direct sum
of a half-spin representation and the standard representation
if $n\equiv 0\pmod{4}$. Moreover,
for $n$ at least 20 with
$n\equiv 0\pmod{4}$, $\HSpin(n)=\Spin(n)/\mu_2$ (the quotient
different from $O^+(n)$) acts generically freely
on a half-spin representation \cite[Theorem 1.1]{GG}.

It remains to consider a field $k$ of characteristic 2.
Garibaldi and Guralnick's result gives the desired upper bound
in most cases. Namely, for $n$ odd and at least 15,
the spin representation has dimension $2^{(n-1)/2}$, and so
$\ed(\Spin(n))\leq 2^{(n-1)/2}-\dim(\Spin(n))=2^{(n-1)/2}-n(n-1)/2$.
For $n\equiv 2\pmod{4}$, the half-spin representations have
dimension $2^{(n-2)/2}$, and so $\ed(\Spin(n))
\leq 2^{(n-2)/2}-n(n-1)/2$. For $n=16$, since the spin group
acts generically freely on the direct sum of a half-spin representation
and the standard representation, $\ed(\Spin(n))\leq 2^{(n-2)/2}+n-n(n-1)/2$
($=24$). 

For $n$ at least 20 and divisible by 4, the optimal upper bound
requires more effort. The following argument is
modeled on Chernousov and Merkurjev's characteristic
zero argument \cite[Theorem 2.2]{CM}.
Namely, consider the map of exact sequences of $k$-group
schemes:
$$\xymatrix@C-10pt@R-10pt{
1\ar[r] & \mu_2 \ar[r]\ar[d]_{=} & \Spin(n) \ar[r]\ar[d] &
\HSpin(n) \ar[r]\ar[d] & 1\\
1\ar[r] & \mu_2 \ar[r] & O^{+}(n) \ar[r] &
\PGO^{+}(n) \ar[r] & 1.
}$$
Since $\HSpin(n)$ acts generically freely on a half-spin
representation, which has dimension $2^{(n-2)/2}$, we have
$\ed(\HSpin(n))\leq 2^{(n-2)/2}-n(n-1)/2$.

By Chernousov-Merkurjev or independently L\"otscher, for
any normal subgroup scheme $C$ of an affine group scheme $G$
over a field $k$,
$$\ed(G)\leq \ed(G/C)+\max\,\ed\,[E/G],$$
where the maximum runs over all field extensions $F$ of $k$
and all $G/C$-torsors $E$ over $F$ \cite[Proposition 2.1]{CM},
\cite[Example 3.4]{Lotscher}. Thus $[E/G]$ is a gerbe over $F$ banded
by $C$. 

Identifying $H^2(K,\mu_p)$ with the 
$p$-torsion in the Brauer group of $K$, we can talk about the index
of an element of $H^2(K,\mu_p)$, meaning the degree of the corresponding
division algebra over $K$. 
For a prime number $p$ and
a nonzero element $E$ of $H^2(K,\mu_p)$ over a field $K$,
the essential dimension (or also the $p$-essential dimension)
of the corresponding $\mu_p$-gerbe over $K$ is equal to the index
of $E$, by Karpenko and Merkurjev \cite[Theorems 2.1 and 3.1]{KM}.

By the diagram above, for any field $F$ over $k$, the image of the connecting
map
$$H^1(F, \HSpin(n))\arrow H^2(F,\mu_2)\subset \Br(F)$$
is contained in the image of the other connecting map
$$H^1(F, PGO^{+}(n))\arrow H^2(F,\mu_2)\subset \Br(F).$$
In the terminology of the Book of Involutions,
the image of the latter map consists of the classes $[A]$
of all central simple $F$-algebras $A$ of degree $n$ with
a quadratic pair $(\sigma,f)$ of trivial discriminant
\cite[section 29.F]{KMRT}. Any torsor for $PGO^{+}(n)$
is split by a field extension of degree a power of 2,
by reducing to the corresponding fact about quadratic forms.
So $\ind(A)$ must be a power of 2, but it also divides $n$,
and so $\ind(A)\leq 2^m$, where
$2^m$ is the largest power of 2 dividing $n$. We conclude
that
\begin{align*}
\ed(\Spin(n))&\leq \ed(\HSpin(n))+2^m\\
&\leq 2^{(n-2)/2}-n(n-1)/2+2^m.
\end{align*}
This completes the proof of the upper bound in Theorem
\ref{spin}.

We now prove the corresponding lower bound for the 2-essential
dimension of the spin group over a field $k$ of characteristic 2.
Since $\ed_2(\Spin(n))\leq \ed(\Spin(n))$, this will imply
that the 2-essential dimension and the essential dimension
are both equal to the number given in Theorem \ref{spin}.

Write $O(2r)$ for the orthogonal group of the quadratic
form $x_1x_2+x_3x_4+\cdots+x_{2r-1}x_{2r}$ over $k$,
and $O(2r+1)$ for the orthogonal group
of $x_1x_2+x_3x_4+\cdots+x_{2r-1}x_{2r}+x_{2r+1}^2$.
Then we have an inclusion $O(2r)\subset O(2r+1)$. Note that
$O(2r)$ is smooth over $k$, with $O(2r)/O^+(2r)\cong \Z/2$.
The group scheme $O(2r+1)$ is not smooth over $k$, but it contains
a smooth connected subgroup $O^+(2r+1)$ with $O(2r+1)\cong O^+(2r+1)
\times \mu_2$. It follows that $O(2r)$ is contained
in $O^+(2r+1)$. Using the subgroup $\Z/2\times \mu_2$ of $O(2)$,
we have a $k$-subgroup scheme 
$K:=(\Z/2\times \mu_2)^r\subset O(2r)\subset O^+(2r+1)$.
Let $G$ be the inverse image of $K$ in the double cover
$\Spin(2r+1)$ of $O^+(2r+1)$. Thus
$G$ is a central extension
$$1\arrow \mu_2\arrow G\arrow (\Z/2)^{r}\times (\mu_2)^r\arrow 1.$$
(Essentially the same ``finite Heisenberg group scheme'' appeared
in the work of Mumford and Sekiguchi on abelian varieties
\cite[Appendix A]{Sekiguchi}.)

To describe the structure of $G$ in more detail, think
of $K=(\mu_2)^r$ as the 2-torsion subgroup scheme of
a fixed maximal torus $T_{SO}\cong (G_m)^r$ in $O^+(2r+1)$.
The chararacter group of $T_{SO}$ is the free abelian group
$\Z\{x_1,\ldots,x_r\}$, and the Weyl group $W=N(T_{SO})/T_{SO}$
of $O^+(2r+1)$
is the semidirect product $S_r\ltimes (\Z/2)^r$.
Here $S_r$ permutes the characters $x_1,\ldots,x_r$ of $T_{SO}$,
and the subgroup $E_r=(\Z/2)^r$ of $W$, with generators
$\epsilon_1,\ldots,\epsilon_r$, acts by: $\epsilon_i$
changes the sign of $x_i$ and fixes $x_j$ for $j\neq i$.
The character group of $K=T_{SO}[2]$
is $\Z/2\{x_1,\ldots,x_r\}$. The group $E_r$ centralizes
$K$, and the group $(\Z/2)^r\times (\mu_2)^r\subset O^+(2r+1)$
above is $E_r\times K$.

Let $L$ be the inverse image of $K$ in $\Spin(2r+1)$,
which is contained in a maximal torus $T$ of $\Spin(2r+1)$,
the inverse image of $T_{SO}$.
The character group $X^*(T)$ is
$$\Z\{x_1,\ldots,x_r,A\}/(2A=x_1+\cdots+x_r).$$
Therefore,
the character group $X^*(L)$ is
$$\Z\{x_1,\ldots,x_r,A\}/(2x_i=0,\; 2A=x_1+\cdots+x_r).$$
(Thus $X^*(L)$ is isomorphic to $(\Z/4)\times (\Z/2)^{r-1}$, and so $L$
is isomorphic to $\mu_4\times (\mu_2)^{r-1}$.)
The Weyl group $W$ of $\Spin(2r+1)$ is the same as that
of $O^+(2r+1)$, namely $S_r\ltimes E_r$.
In particular, the element $\epsilon_i$
of $E_r$ acts on $X^*(T)$ by changing the sign of $x_i$
and fixing $x_j$ for $j\neq i$, and hence it sends
$A$ to $A-x_i$.

The subset $S$ of $X^*(L)$ corresponding to characters of $L$
which are faithful on the center $\mu_2$ of $L$
is the complement of the subgroup $X^*(K)=\Z/2\{x_1,\ldots,
x_r\}$. Therefore, $S$ has order $2^r$. The group $E_r=(\Z/2)^r$
acts freely and transitively on $S$, since
$$\bigg( \prod_{i\in I}\epsilon_i\bigg) (A)=A-\sum_{i\in I}x_i$$
for any subset $I$ of $\{1,\ldots,r\}$.

The group $G=E_r\cdot L$ is the central extension considered
above. Now, let $V$ be a representation of $G$
over $k$ on which the center $\mu_2\subset L$ acts
faithfully by scalars. Then the restriction of $V$ to $L$
is fixed (up to isomorphism)
by the action of $E_r$ on $X^*(L)$. By the previous
paragraph, the $2^r$ 1-dimensional representations of $L$ that
are nontrivial on the center $\mu_2$ all occur
with the same multiplicity in $V$. Therefore, $V$ has dimension
a multiple of $2^r$. This bound is optimal,
since the spin representation $W$ of $\Spin(2r+1)$ has dimension $2^{r}$
over $k$, and the center $\mu_2$ acts faithfully by scalars
on $W$.

We use the following result of Merkurjev's \cite[Theorem 5.2]{Merkmax},
\cite[Remark 4.5]{KM}. (The first reference covers the case
of the group scheme $\mu_p$ in characteristic $p$, as needed here.)

\begin{theorem}
Let $k$ be a field and $p$ be a prime number. Let $1\arrow \mu_p\arrow G
\arrow Q\arrow 1$ be a central extension of affine group schemes
over $k$. For a field extension $K$ of $k$, let
$\partial_K\colon H^1(K,Q)\arrow H^2(K,\mu_p)$
be the boundary homomorphism in fppf cohomology.
Then the maximal value of the index
of $\partial_K(E)$, as $K$ ranges over all field extensions of $k$
and $E$ ranges over all $Q$-torsors over $K$, is equal to the greatest
common divisor of the dimensions of all representations of $G$
on which $\mu_p$ acts by its standard representation.
\end{theorem}

As mentioned above, for a prime number $p$ and
a nonzero element $E$ of $H^2(K,\mu_p)$ over a field $K$,
the essential dimension (or also the $p$-essential dimension)
of the corresponding $\mu_p$-gerbe over $K$ is equal to the index
of $E$.

Finally, consider a central extension $1\arrow \mu_p\arrow G
\arrow Q\arrow 1$ of finite group schemes over a field $k$.
Generalizing an argument of Brosnan-Reichstein-Vistoli,
Karpenko and Merkurjev showed that the $p$-essential dimension
of $G$ (and hence the essential dimension of $G$) is at least
the $p$-essential dimension of the $\mu_p$-gerbe over $K$
associated to any $Q$-torsor over any field $K/k$ \cite[Theorem 4.2]{KM}.
By the analysis above of representations
of the finite subgroup scheme $G$ of $\Spin(2r+1)$ over a field $k$
of characteristic 2, we find that
$\ed_2(G)\geq 2^{r}$. For a closed subgroup scheme $G$
of a group scheme $L$ over a field $k$
and any prime number $p$, we have $\ed_p(L)+\dim(L)\geq \ed_p(G)
+\dim(G)$ \cite[Corollary 4.3]{Merkess} (which covers the case
of fppf torsors for non-smooth group schemes, as needed here).
Applying this to the
subgroup scheme $G$ of $\Spin(2r)$, we conclude that
$\ed_2(\Spin(2r+1))\geq
2^{r}-\dim(\Spin(2r+1))=2^{r}-r(2r+1)$. Combining this
with the upper bound discussed above, we have
$$\ed(\Spin(2r+1))=\ed_2(\Spin(2r+1))=2^{r}-r(2r+1)$$
for $r\geq 7$.

The proof of the lower bound for $\ed_2(\Spin(2r))$ when
$r$ is odd is similar. The intersection of the subgroup
$K=(\mu_2\times \Z/2)^r\subset O(2r)$ with $O^+(2r)$
is $K_1\cong (\mu_2)^r\times (\Z/2)^{r-1}$, where $(\Z/2)^{r-1}$
denotes the kernel of the sum $(\Z/2)^r\arrow \Z/2$.
As a result, the double cover $\Spin(2r)$ contains a subgroup $G_1$
which is a central extension
$$1\arrow \mu_2\arrow G_1\arrow (\Z/2)^{r-1}\times (\mu_2)^r\arrow 1.$$
In this case, an argument analogous to the one for $G$ shows that
every representation of $G_1$ on which the center $\mu_2$ acts
by its standard representation has dimension a multiple of $2^{r-1}$
(rather than $2^r$). The argument is otherwise identical to the
argument for $\Spin(2r+1)$, and we find that
$\ed_2(\Spin(2r))\geq 2^{r-1}-r(2r-1)$. For $r$ odd at least 9,
this agrees with the lower bound found earlier, which proves
the theorem on $\Spin(n)$ for $n\equiv 0\pmod{4}$.

It remains to show that for $n$ a multiple of 4, with $2^m$ the largest
power of 2 dividing $n$, we have
$$\ed_2(\Spin(n))\geq 2^{(n-2)/2}+2^m-n(n-1)/2.$$
The argument follows that of Merkurjev in characteristic not 2
\cite[Theorem 4.9]{Merkess}. 

Namely, for $n$ a multiple of 4, the center $C$ of $G:=\Spin(n)$
is isomorphic to $\mu_2\times \mu_2$, and $H:=G/C$ is the group
$PGO^{+}(n)$. An $H$-torsor over a field $L$ over $k$ is equivalent
to a central simple algebra $A$ of degree $n$ over $L$ with a quadratic
pair $(\sigma,f)$ and with trivialized discriminant, meaning
an isomorphism from the center of the Clifford algebra
$C(A,\sigma,f)$ to $L\times L$
\cite[section 29.F]{KMRT}. The image of the homomorphism
from $C^*\cong (\Z/2)^2$ to the Brauer group of $L$
is equal to $\{0,[A],[C^+],[C^-]\}$, where $C^+$ and $C^-$
are the simple components of the Clifford algebra; each is 
a central simple algebra of degree $2^{(n-2)/2}$ over $L$.
By Merkurjev, there is a field $L$ over $k$ and an $H$-torsor $E$
over $L$ such that $\ind(C^+)=\ind(C^-)=2^{(n-2)/2}$
and $\ind(A)=2^m$ \cite[section 4.4 and Theorem 5.2]{Merkmax}.
We use the following result
\cite[Example 3.7]{Merkess}:

\begin{lemma}
Let $L$ be a field, $p$ a prime number, and $r$ a natural
number. Let $C$ be the group scheme $(\mu_p)^r$,
and let $Y$ be a $C$-gerbe over $L$.
Then the $p$-essential dimension of $Y$, and also the essential
dimension of $Y$, is the minimum,
over all bases $u_1,\ldots,u_r$ for $C^*$,
of $\sum_{i=1}^r \ind(u_i(Y))$.
\end{lemma}

It follows that the 2-essential dimension of the $(\mu_2)^2$-gerbe
$E/G$ over $L$ associated to the $H$-torsor $E$ above
is
$$\ed_2(E/G)=\ind(A)+\ind(C^+)=2^{(n-2)/2}+2^m.$$
It follows that
\begin{align*}
\ed(\Spin(n))&\geq \ed_2(\Spin(n))\\
&\geq \ed_2(E/G)-\dim(G/C)\\
&=2^{(n-2)/2}+2^m-n(n-1)/2.
\end{align*}
\end{proof}

\section{Low-dimensional spin groups}

Rost and Garibaldi determined the essential dimension of the spin groups
$\Spin(n)$ with $n\leq 14$
in characteristic not 2
\cite[Table 23B]{Garibaldi}. It should be possible
to compute the essential dimension of low-dimensional
spin groups in characteristic 2 as well. The following section
carries this out for $\Spin(n)$ with $n\leq 10$. We find 
that in this range (as for $n\geq 15$),
the essential dimension of the spin group
is the same in characteristic 2 as in characteristic not 2,
unlike what happens for $O(n)$ and $SO(n)$.

For $n\leq 10$, we give group-theoretic proofs which
work almost the same way
in any characteristic, despite the distinctive features
of quadratic forms in characteristic 2.

\begin{theorem}
\label{lowdim}
For $n\leq 10$, the essential dimension, as well as the 2-essential
dimension, of the split group $\Spin(n)$
over a field $k$ of any characteristic is given by:
$$\begin{array}{rr}
n& \ed(\Spin(n))\\
\leq 6 & 0\\
7 & 4\\
8 & 5\\
9 & 5\\
10 & 4
\end{array}$$
\end{theorem}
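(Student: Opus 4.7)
The plan splits into the two regimes $n\leq 6$ and $7\leq n\leq 10$.

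For $n\leq 6$, I would invoke the classical exceptional isomorphisms of simply-connected semisimple group schemes, which hold uniformly in any characteristic (they come from the identifications of Dynkin diagrams $B_1=A_1$, $D_2=A_1\times A_1$, $B_2=C_2$, $D_3=A_3$, realized concretely via Clifford-algebra constructions):
$$\Spin(3)\cong SL_2,\quad \Spin(4)\cong SL_2\times SL_2,\quad \Spin(5)\cong Sp_4,\quad \Spin(6)\cong SL_4.$$
Each target is a special group (every torsor over every extension field is trivial), so $H^1(F,\Spin(n))=0$ for every $F/k$, whence $\ed(\Spin(n))=\ed_2(\Spin(n))=0$.

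For $7\leq n\leq 10$, I would follow the group-theoretic strategy of Rost and Garibaldi \cite[Table 23B]{Garibaldi}, checking case by case that it transfers to characteristic 2 via Baeza's characteristic-free theory of quadratic forms. A $\Spin(n)$-torsor over a field $F/k$ corresponds (via $1\to\mu_2\to\Spin(n)\to SO(n)\to 1$) to an $n$-dim nonsingular quadratic form with trivial discriminant and trivial Clifford invariant, together with a spin lift; such forms lie in $I^3$ of the relevant Witt group and are controlled by 3-fold Pfister forms $\langle\langle a,b,c\rangle\rangle$, which persist without change in characteristic 2.

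For the upper bounds I would treat each case in turn. For $n=7$, every 7-dim form in $I^3$ is a Pfister neighbor of a 3-fold Pfister, giving a description with 4 parameters $(a,b,c,\lambda)$. For $n=8$, an 8-dim form in $I^3$ is a scaled 3-Pfister (3 parameters), and one additional $\mu_2$-valued parameter is needed to fix the spin lift, giving 5. For $n=9$, I would exploit the inclusion $\Spin(8)\subset\Spin(9)$: a 9-dim form with trivial invariants decomposes as an 8-dim $I^3$-form plus a scalar summand, and since the extra scalar is constrained by the discriminant, the bound 5 follows. For $n=10$, I would combine a generic-stabilizer analysis of the 16-dim half-spin representation with Merkurjev's formula $\ed(G)\leq\dim(V)-\dim(G)+\dim(H)+\ed(H)$, where $H$ is the generic stabilizer (a spin subgroup of smaller rank), and chain through the bound already established for $\Spin(7)$ to obtain 4.

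For the lower bounds I would combine the Arason--Rost degree-3 invariant $e_3$ of $I^3$-forms, whose characteristic-2 analogue lives in Kato's differential cohomology $H^3_2(F)$, with the Karpenko--Merkurjev gerbe bound applied to carefully chosen finite subgroup schemes of $\Spin(n)$, in the spirit of the proof of Theorem \ref{spin}. The main obstacle will be verifying that the characteristic-2 Arason invariant is nontrivial on generic 3-Pfister torsors and that the gerbe-index computations of \cite{KM} transfer cleanly through Kato--Milne cohomology; once this cohomological bookkeeping is in place, the upper-bound constructions above work essentially verbatim in any characteristic.
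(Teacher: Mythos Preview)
Your treatment of $n\leq 6$ matches the paper exactly.

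For $7\leq n\leq 10$, however, there is a genuine gap in your lower-bound strategy. The Arason invariant $e_3$ lives in degree~3 and so can establish at best $\ed\geq 3$; the Karpenko--Merkurjev finite-subgroup-scheme method, as used in Theorem~\ref{spin}, produces bounds of the shape $2^r-\dim\Spin(n)$, which are negative throughout this range (e.g.\ $2^3-21<0$ for $\Spin(7)$). Neither tool can reach the required values $4$ and $5$. The paper instead constructs explicit cohomological invariants one and two degrees higher: an $f_4$ with values in $H^{4,3}$ for $\Spin(7)$ and $\Spin(10)$, and an $f_5$ with values in $H^{5,4}$ for $\Spin(8)$ and $\Spin(9)$. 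These arise not from the quadratic form alone but from combining the several $8$- or $16$-dimensional forms attached to a spin torsor via the standard \emph{and} (half-)spin representations, which together assemble into a $4$- or $5$-fold Pfister form. The extra $\mu_2$ (or $\mu_2\times\mu_2$) datum recording the spin lift is precisely what furnishes the additional Pfister slot.

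Your upper-bound sketches also diverge from the paper and are in places shaky. The paper uses a uniform generic-stabilizer method (Lemma~\ref{surj}) on projectivized spin or half-spin representations, obtaining surjections from $H^1(F,G_2\times\mu_2)$, $H^1(F,\Spin(7)\times\mu_2)$, etc., rather than direct form-theoretic parameter counts. Your count for $n=8$ (``scaled 3-Pfister, 3 parameters, plus one $\mu_2$'') gives $4$, not $5$; a scaled $3$-Pfister already needs $4$ parameters, and the center of $\Spin(8)$ is $\mu_2\times\mu_2$, not $\mu_2$. Your $n=9$ reduction via ``$8$-dim $I^3$-form plus scalar summand'' addresses only the image in $H^1(F,SO(9))$, not the spin lift. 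For $n=10$ the paper acts on $P(V)\times P(W)$ rather than $W$ alone, landing on $\Spin(7)\times\mu_4$ and then absorbing $\mu_4$ into a $G_m$, which cleanly yields the surjection from $H^1(F,\Spin(7))$; your half-spin stabilizer $\Spin(7)\cdot(G_a)^8$ would require extra work to dispose of the unipotent radical.
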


\begin{proof}
As discussed above, it suffices to consider the case
of a field $k$ of characteristic 2. For $n\leq 6$,
every $\Spin(n)$-torsor over a field is trivial,
for example by the exceptional
isomorphisms $\Spin(3)\cong SL(2)$, $\Spin(4)\cong SL(2)\times SL(2)$,
$\Spin(5)\cong Sp(4)$, and $\Spin(6)\cong SL(4)$. It follows
that $\ed(\Spin(n))=0$ for $n\leq 6$.

We first recall some general definitions.
For a field $k$ of characteristic $p>0$,
let $H^{i,j}(k)$ be the etale motivic cohomology group
$H^{i}_{\et}(k,\Z/p(j))$, or equivalently
$$H^i_{\et}(k,\Z/p(j))\cong H^{i-j}_{\et}(k,\Omega^j_{\log}),$$
where $\Omega^j_{\log}$ is the subgroup of the group $\Omega^j$
of differential forms on the separable closure $k_s$ over $\F_p$
spanned by products $(da_1/a_1)\wedge\cdots\wedge(da_j/a_j)$
with $a_1,\ldots,a_j\in k_s^*$ \cite{GL}. The group $H^{i,j}(k)$
is zero except when $i$ equals
$j$ or $j+1$, because $k$ has $p$-cohomological dimension at most 1
\cite[section II.2.2]{SerreCohoGal}. The symbol $\{ a_1,\ldots,a_{n-1},b]$
denotes the element of $H^{n,n-1}(k)$ which is the product
of the elements $a_i\in k^*/(k^*)^p\cong H^{1,1}(k)$
and $b\in k/\{a^p-a:a\in k\}\cong H^{1,0}(k)$.

Also, for a field $k$ of characteristic 2, let $W(k)$ denote
the Witt ring of symmetric bilinear forms over $k$,
and let $I_q(k)$ be the Witt group of nondegenerate quadratic forms
over $k$. (By the conventions in section \ref{notation},
$I_q(k)$ consists only of even-dimensional
forms.)  Then $I_q(k)$ is a module over $W(k)$ via tensor
product \cite[Lemma 8.16]{EKM}.
Let $I$ be the kernel of the homomorphism $\text{rank}\colon W(k)\arrow
\Z/2$, and let 
$$I^m_q(k)=I^{m-1}\cdot I_q(k),$$
following \cite[p.~53]{EKM}. To motivate the notation, observe that
the class of an $m$-fold quadratic Pfister form
$\langle \langle a_1,\ldots,a_{m-1},b]]$ lies in $I^m_q(k)$.
By definition, for $a_1,\ldots,a_{m-1}$ in $k^*$ and $b$ in $k$,
$\langle\langle a_1,\ldots,a_{m-1},b]]$ is the quadratic
form $\langle\langle a_1\rangle\rangle_b\otimes \cdots\otimes
\langle\langle a_{m-1}\rangle\rangle_b
\otimes \langle\langle b]]$ of dimension $2^m$,
where $\langle\langle a\rangle\rangle_b$
is the bilinear form $\langle 1,a\rangle$
and $\langle\langle b]]$ is the quadratic form $[1,b]=x^2+xy+by^2$.

In analogy with the Milnor conjecture, Kato
proved the isomorphism
$$I^m_q(F)/I^{m+1}_q\cong H^{m,m-1}(F)$$
for every field $F$ of characteristic 2 \cite[Fact 16.2]{EKM}.
The isomorphism takes the quadratic Pfister form $\langle\langle
a_1,\ldots,a_{m-1},b]]$ to the symbol $\{ a_1,\ldots,a_{m-1},b]$.
(For this paper, it would suffice to have Kato's
homomorphism, without knowing that it is an isomorphism.)

We will use the following standard approach
to bounding the essential dimension of a group.

\begin{lemma}
\label{surj}
Let $G$ be an affine group scheme of finite type over a field $k$.
Suppose that $G$ acts on a $k$-scheme $Y$ with a nonempty
open orbit $U$. Suppose that for every $G$-torsor $E$
over an infinite field $F$ over $k$, the twisted form
$(E\times Y)/G$ of $Y$ over $F$ has a Zariski-dense set
of $F$-points. Finally, suppose that $U$ has a $k$-point $x$,
and let $N$ be the stabilizer $k$-group scheme of $x$ in $G$.
Then
$$H^1(F,N)\arrow H^1(F,G)$$
is surjective for every infinite field $F$ over $k$
(or for every field $F$ over $k$, if $G$ is smooth and connected).
As a result, $\ed_k(G)\leq \ed_k(N)$.
\end{lemma}

The proof is short, the same as that of \cite[Theorem 9.3]{Garibaldi}.
(Note that even if $k$ is finite, we get the stated upper bound
for the essential dimension of $G$: a $G$-torsor
over a finite field $F$ that contains $k$ causes no problem,
because $F$ has transcendence degree 0 over $k$.)
If $G$ is smooth and connected, then $H^1(F,G)$ is in fact
trivial for every finite field $F$ that contains $k$, by Lang \cite{Lang};
that implies the statement in the theorem
that $H^1(F,N)\arrow H^1(F,G)$ is
surjective for {\it every }field $F$ over $k$.

The assumption about a Zariski-dense set of rational points holds,
for example, if $Y$ is a linear representation $V$ of $G$,
or if $Y$ is the associated projective space $P(V)$ to a representation,
or (as we use later) a product $P(V)\times P(W)$.

We use Garibaldi and Guralnick's calculation
of the stabilizer group scheme of a general
$k$-point in the spin (for $n$ odd) or a half-spin
(for $n$ even) representation $W$ of the split group
$\Spin(n)$, listed in Table 1 here
\cite[Table 1]{GG}.
Here $\Spin(n)$ has an open orbit on the projective space $P(W)$
of lines in $W$ if $n\leq 12$ or $n=14$,
and an open orbit on $W$ if $n=10$. (To be precise, we will use
that even if $k$ is finite, there is a $k$-point in the open
orbit for which the stabilizer $k$-group scheme
is the {\it split }group listed in the table.)
\begin{table}
\centering
\begin{tabular}{ccc}
$n$ &$ \text{char }k\neq 2$ & $\text{char }k=2$\\
6 & $SL(3)\cdot (G_a)^3$ & same\\
7 & $G_2$ & same\\
8 & $\Spin(7)$ & same\\
9 & $\Spin(7)$ & same\\
10 & $\Spin(7)\cdot (G_a)^8$ & same\\
11 & $SL(5)$ & $\Z/2\ltimes SL(5)$\\
12 & $SL(6)$ & $\Z/2\ltimes SL(6)$\\
13 & $SL(3)\times SL(3)$ & $\Z/2\ltimes (SL(3)\times SL(3))$\\
14 & $G_2\times G_2$ & $\Z/2\ltimes(G_2\times G_2)$
\end{tabular}
\caption{Generic stabilizer of spin (or half-spin) representation
of $\Spin(n)$}
\end{table}

We now begin to compute the essential dimension of
the split group $G=\Spin(7)$ over a field $k$
of characteristic 2.
Let $W$ be the 8-dimensional spin representation
of $G$.
Then $G$ has an open orbit on the projective space 
$P(W)$ of lines in $W$. By Table 1, there is a $k$-point $x$ in $W$
whose image in $P(W)$ is in the open
orbit such that the stabilizer of $x$ in $G$ is the split
exceptional group $G_2$.
Since $G$ preserves a quadratic form
on $W$, the stabilizer $H$ of the corresponding $k$-point
in $P(W)$ is at most $G_2\times \mu_2$. In fact, $H$ is equal
to $G_2\times \mu_2$, because the center $\mu_2$ of $G$
acts trivially on $P(W)$.

By Lemma \ref{surj}, the inclusion $G_2\times \mu_2\inj G$
induces a surjection
$$H^1(F,G_2\times\mu_2)\arrow H^1(F,G)$$
for every field $F$ over $k$.
Over any field $F$, $G_2$-torsors up to isomorphism can be identified
with 3-fold quadratic Pfister forms $\langle\langle a_1,a_2,b]]$
(with $a_1,a_2\in F^*$ and $b\in F$),
and so $G_2$ has essential dimension 3 \cite[Th\'eor\`eme 11]{SerreCohoGal}.
Since $\mu_2$
has essential dimension 1, the surjectivity above
implies that $G=\Spin(7)$ has essential dimension at most 4.

Next, a $G$-torsor determines two quadratic forms of dimension 8.
Besides the obvious homomorphism $\chi_1\colon G\inj \Spin(8)\arrow SO(8)$
(which is trivial on the center $\mu_2$ of $G$), we have
the spin representation $\chi_2\colon G\arrow SO(8)$, on which $\mu_2$
acts faithfully by scalars. Thus a $G$-torsor $u$ over a field $F$
over $k$ determines two quadratic forms of dimension 8
over $F$, which we call $q_1$ and $q_2$.

To describe these quadratic forms in more detail, use that
every $G$-torsor comes from a torsor for $G_2\times \mu_2$.
The two homomorphisms $G_2\inj G\arrow SO(8)$ (via $\chi_1$
and $\chi_2$) are both conjugate to the standard inclusion.
Also, $\chi_1$ is trivial on the $\mu_2$ factor, while $\chi_2$
acts faithfully by scalars on the $\mu_2$ factor. It follows 
that $q_1$ is a quadratic Pfister form, $\langle\langle
a,b,c]]$ (the form associated to a $G_2$-torsor), while $q_2$
is a scalar multiple of that form, $d\langle\langle a,b,c]]$.

Therefore, a $G$-torsor $u$ canonically determines a 4-fold
quadratic Pfister form,
$$q_1+q_2=\langle\langle d,a,b,c]].$$
Define $f_4(u)$ to be the associated element of $H^{4,3}(F)$,
$$f_4(u)=\{ d,a,b,c].$$
By construction, this is well-defined and an invariant of $u$.
This invariant is normalized (zero on the trivial $G$-torsor)
and not zero. (By considering the subgroup $G_2\times \mu_2\subset
\Spin(7)$, where there is a $G_2\times \mu_2$-torsor associated
to any elements $a,b,d$ in $F^*$ and $c$ in $F$, we see that $a,b,c,d$ can
be chosen arbitrarily. By taking $F$ to be the rational
function field $k(a,b,c,d)$, we see that the element
$f_4(u)=\{ d,a,b,c]$ of $H^{4,3}(F)$ can be nonzero. For that, one can
use the computation of $H^{n,n-1}$ of a rational function field
by Izhboldin \cite{Izhboldinrat}.)

Therefore, $G=\Spin(7)$ has essential dimension at least 4.
The opposite inequality was proved above, and so
$\Spin(7)$ has essential dimension equal to 4. Since the lower
bound is proved by constructing a mod 2 cohomological invariant,
this argument also shows that $\Spin(7)$ has 2-essential dimension
equal to 4. For the same reason, the computations of essential
dimension below (for $\Spin(n)$ with $8\leq n\leq 10$)
also give the 2-essential dimension.

Next, we turn to $\Spin(8)$. At first,
let $G=\Spin(2r)$ for a positive integer $r$ over a field
$k$ of characteristic 2.
Let $V$ be the standard $2r$-dimensional
representation of $G$. Then $G$ has an open orbit
in the projective space $P(V)$ of lines in $V$. The stabilizer
$k$-group scheme $H$ of a general $k$-point in $P(V)$ is conjugate
to $\Spin(2r-1)\cdot Z$, where $Z$ is the center of $\Spin(2r)$,
with $\Spin(2r-1)\cap Z=\mu_2$.
(In more detail, a general line in $V$ is spanned by a vector $x$
with $q(x)\neq 0$, where $q$ is the quadratic form on $V$.
Then the stabilizer of $x$ in $SO(V)$ is isomorphic to $SO(S)$,
where $S:=x^{\perp}$ is a hyperplane in $V$ on which $q$ restricts
to a nonsingular quadratic form of dimension $2r-1$,
with $S^{\perp}$ equal to the line $k\cdot x\subset S$.)
Here
$$Z\cong \begin{cases}
\mu_2\times\mu_2 &\text{if $r$ is even}\\
\mu_4 &\text{if $r$ is odd.}
\end{cases}$$
In particular, if $r$ is even, then $H\cong \Spin(2r-1)\times\mu_2$.
Thus, for $r$ even, the inclusion $\Spin(2r-1)\times\mu_2\inj G$
induces a surjection
$$H^1(F,\Spin(2r-1)\times\mu_2)\arrow H^1(F,G)$$
for every field $F$ over $k$, by Lemma \ref{surj}.

It follows that, for $r$ even, the essential dimension of $\Spin(2r)$
is at most 1 plus the essential dimension
of $\Spin(2r-1)$. Since $\Spin(7)$ has essential
dimension 4, $G=\Spin(8)$ has essential
dimension at most 5.

Before proving that equality holds,
let us analyze $G$-torsors in more detail. We know
that $H^1(F,\Spin(7)\times\mu_2)\arrow H^1(F,G)$ is onto,
for all fields $F$ over $k$.
Also, we showed earlier that $H^1(F,G_2\times\mu_2)\arrow H^1(F,\Spin(7))$
is surjective. Therefore,
$$H^1(F,G_2\times\mu_2\times\mu_2)\arrow H^1(F,G)$$
is surjective for all fields $F$ over $k$, where $Z=\mu_2\times\mu_2$
is the center of $G$. As discussed earlier,
$G_2$-torsors up to isomorphism
can be identified with 3-fold quadratic Pfister forms.
It follows that
every $G$-torsor is associated to some
3-fold quadratic Pfister form $\langle\langle a,b,c]]$
and some elements $d,e$ in $F^*$, which yield elements
of $H^1(F,\mu_2)=F^*/(F^*)^2$.

Next, observe that a $G$-torsor determines several quadratic forms.
Besides the obvious double covering $\chi_1\colon G\arrow SO(8)$,
the two half-spin representations of $G$ give two other
homomorphisms $\chi_2,\chi_3\colon G\arrow SO(8)$.
(These three homomorphisms can
be viewed as the quotients of $G$ by the three $k$-subgroup schemes
of order 2 in $Z$. They are permuted 
by the group $S_3$
of ``triality'' automorphisms of $G$.)
Thus a $G$-torsor $u$ over a field $F$ over $k$ determines
three quadratic forms of dimension 8, which we call
$q_1,q_2,q_3$.

To describe how these three quadratic forms are related,
use that every $G$-torsor comes from a torsor for $G_2\times\mu_2\times
\mu_2$. The three homomorphisms $G_2\arrow G\arrow SO(8)$
(via $\chi_1$, $\chi_2$, and $\chi_3$)
are all conjugate to the standard inclusion, whereas the three
homomorphisms send $\mu_2\times \mu_2$ to the center
$\mu_2\subset SO(8)$ by the three possible surjections.
It follows that the three quadratic forms can be written
as $q_1=d\langle\langle a,b,c]]$,
$q_2=e\langle\langle a,b,c]]$, and $q_3=de\langle\langle a,b,c]]$.

Note that a scalar multiple of a quadratic Pfister form,
$q=d\langle\langle a_1,\ldots,
a_{m-1},b]]$ (as a quadratic form up to isomorphism),
uniquely determines the associated quadratic Pfister form
$q_0=\langle\langle a_1,\ldots,a_{m-1},b]]$ up to isomorphism.
(Proof: it suffices to show that if $q$ and $r$ are $m$-fold
quadratic Pfister forms over $F$
with $aq\cong r$ for some $a$ in $F^*$, then $q\cong r$. Since $r$
takes value 1, so does $aq$, and so $q$ takes value $a^{-1}$.
But then $a^{-1}q\cong q$ by the multiplicativity of quadratic
Pfister forms \cite[Corollary 9.9]{EKM}. Therefore, $r\cong aq\cong q$.)

We now define an invariant for $G=\Spin(8)$ over $k$ with values
in $H^{5,4}$. Given a $G$-torsor $u$ over a field $F$ over $k$,
consider the three associated quadratic forms $q_1,q_2,q_3$
as above. By the previous paragraph, $q_1=d\langle \langle a,b,c]]$
determines the quadratic Pfister form
$q_0=\langle\langle a,b,c]]$. So $u$ determines 
the 5-fold quadratic Pfister form
$$q_0+q_1+q_2+q_3=\langle\langle d,e,a,b,c]].$$
The associated class
$$f_5(u)=\{d,e,a,b,c]\in H^{5,4}(F)$$
is therefore an invariant of $u$.

The invariant $f_5$ is normalized and not 0, as shown by considering
the subgroup $G_2\times Z\subset G=\Spin(8)$, where $Z=\mu_2\times \mu_2$:
there is a $G_2\times Z$-torsor associated to any elements
$a,b,d,e$ in $F^*$ and $c$ in $F$, and $f_5$ of the associated $G$-torsor
is $\{d,e,a,b,c]$ in $H^{5,4}(F)$. Therefore, $G$ has essential
dimension at least 5. Since the opposite inequality was proved
above, $G=\Spin(8)$ has essential dimension over $k$ equal to 5.

Next, let $G=\Spin(9)$ over a field $k$ of characteristic 2.
Let $W$ be the spin representation of $G$, of dimension 16,
corresponding to a homomorphism $G\arrow SO(16)$. (A reference
for the fact that this self-dual representation is orthogonal
in characteristic 2, as in other characteristics,
is \cite[Theorem 9.2.2]{GN}.)
By Table 1, $G$ has an open orbit on the space
$P(W)$ of lines in $W$, and the stabilizer in $G$ of a general
$k$-point in $W$ is conjugate to $\Spin(7)$. (This is
not the standard inclusion of $\Spin(7)$ in $\Spin(9)$,
but rather a lift of the spin representation $\chi_2\colon
\Spin(7)\arrow SO(8)$ to $\Spin(8)$ followed by the standard
inclusion $\Spin(8)\inj \Spin(9)$.
In particular, the image of $\Spin(7)$ does not contain the center $\mu_2$
of $G=\Spin(9)$.)
Since $G$ preserves a quadratic form on $W$, it follows
that the stabilizer in $G$ of a general $k$-point in $P(W)$
is conjugate to $\Spin(7)\times \mu_2$, where $\mu_2$
is the center of $\Spin(9)$ (which acts faithfully by scalars
on $W$). Therefore, by Lemma \ref{surj},
the inclusion of $\Spin(7)\times \mu_2$
in $G=\Spin(9)$ induces a surjection
$$H^1(F,\Spin(7)\times \mu_2)\arrow H^1(F,G)$$
for every field $F$ over $k$.

Since $\Spin(7)$ has essential dimension 4 over $k$ as shown above,
$G=\Spin(9)$ has essential dimension at most $4+1=5$.

Next, a $G$-torsor determines several quadratic forms. Besides
the obvious homomorphism $R\colon
G\inj \Spin(10)\arrow SO(10)$,
we have the spin representation $S\colon G\arrow SO(16)$.
Thus a $G$-torsor over a field $F$ over $k$ determines
a quadratic form $r$ of dimension 10 and a quadratic form
$s$ of dimension 16.

To describe how these forms are related, use that every $G$-torsor
comes from a torsor for the subgroup $\Spin(7)\times \mu_2$
described above. The restriction of $R$
to the given subgroup $\Spin(7)$ is the composition
of the spin representation $\chi_2\colon \Spin(7)\arrow SO(8)$
with the obvious inclusion $SO(8)\inj SO(10)$. The restriction
of $S$ to the given subgroup
$\Spin(7)$ is the direct sum of the standard representation
$\chi_1\colon \Spin(7)\arrow SO(8)$ and the spin representation
$\chi_2\colon \Spin(7)\arrow SO(8)$. Finally, $R$
is trivial on the second factor
$\mu_2$ (the center of $G$), whereas $S$ acts faithfully by scalars
on $S$.

Now, let $(u_1,e)$ be a $\Spin(7)\times \mu_2$-torsor over $k$, where
$u_1$ is a $\Spin(7)$-torsor and $e$ is in $H^1(F,\mu_2)=F^*/(F^*)^2$,
which we lift to an element $e$ of $F^*$.
By the earlier analysis of the quadratic forms
associated to a $\Spin(7)$-torsor, the quadratic form associated
to $u_1$
via the standard representation $\chi_1\colon \Spin(7)\arrow SO(8)$
is a 3-fold quadratic Pfister form $\langle\langle a,b,c]]$, while
the quadratic form associated to $u_1$ via the spin representation
$\chi_2\colon \Spin(7)\arrow SO(8)$ is a multiple of the same form,
$d\langle\langle a,b,c]]$.

By the analysis of representations two paragraphs back, it follows
that the quadratic form associated to $(u_1,e)$ via the representation
$R\colon G\arrow SO(10)$ is $r=H+d\langle\langle a,b,c]]$,
where $H$ is the hyperbolic plane.
Also, the quadratic form associated to $(u_1,e)$ via the representation
$S\colon G\arrow SO(16)$ is $s=e\langle\langle a,b,c]]+de
\langle\langle a,b,c]]$.

Next, $r$ determines the quadratic form
$r_0=d\langle\langle a,b,c]]$ by Witt cancellation \cite[Theorem 8.4]{EKM},
and that in turn
determines the quadratic Pfister form $q_0=\langle\langle a,b,c]]$
as shown above. Therefore, a $G$-torsor $u$
determines
the 5-fold quadratic Pfister form
$$q_0+r_0+s=\langle\langle d,e,a,b,c]]$$
up to isomorphism. 

Therefore, defining
$$f_5(u)=\{d,e,a,b,c]$$
in $H^{5,4}(F)$ yields an invariant of $u$. By our earlier
description of $\Spin(7)$-torsors, we can take $a,b,d,e$ to be
any elements of $F^*$ and $c$ any element of $F$.
Therefore, $f_5$ is a nonzero normalized
invariant of $G$ over $k$ with values in $H^{5,4}$. It follows
that $G$ has essential dimension at least 5. Since the opposite
inequality was proved earlier, $G=\Spin(9)$ over $k$ has essential
dimension equal to 5.

Finally, let $G=\Spin(10)$ over a field $k$ of characteristic 2.
Let $V$ be the 10-dimensional standard
representation of $G$, corresponding to the double
covering $G\arrow SO(10)$, and let $W$ be one of the 16-dimensional
half-spin representations of $G$, corresponding to 
a homomorphism $G\arrow SL(16)$.
(The other half-spin representation of $G$ is the dual $W^*$.)

As discussed above for any group $\Spin(2r)$, $G=\Spin(10)$
has an open orbit on $P(V)$, with generic stabilizer $\Spin(9)\cdot \mu_4$.
(Here $\mu_4$ is the center of $G$, which contains the center
$\mu_2$ of $\Spin(9)$.) Consider the action of $G$
on $P(V)\times P(W)\cong \P^9\times \P^{15}$. As discussed above,
$\Spin(9)$ (and hence $\Spin(9)\cdot\mu_4$)
has an open orbit on $P(W)$. As a result,
$G$ has an open orbit on $P(V)\times P(W)$. Moreover,
the generic stabilizer of $\Spin(9)$ on $P(W)$
is $\Spin(7)\times \mu_2$, where the inclusion $\Spin(7)\inj \Spin(9)$
is the composition of the spin representation $\Spin(7)\inj \Spin(8)$
with the standard inclusion into $\Spin(9)$; in particular, the image
does not contain the center $\mu_2$ of $\Spin(9)$.
Therefore, the generic stabilizer
of $\Spin(9)\cdot \mu_4\subset \Spin(10)$
on $P(W)$ is $\Spin(7)\times \mu_4$. We conclude that $G$
has an open orbit on $P(V)\times P(W)$, with generic stabilizer
$\Spin(7)\times \mu_4$. It follows that
$$H^1(F,\Spin(7)\times \mu_4)\arrow H^1(F,G)$$
is surjective for every field $F$ over $k$,
by Lemma \ref{surj}.

The image $H_2$ of the subgroup $H=\Spin(7)\times \mu_4\subset G$ in $SO(10)$
is $\Spin(7)\times\mu_2$, where $\Spin(7)$ is contained
in $SO(8)$ (and contains the center $\mu_2$ of $SO(8)$)
and $\mu_2$ is the center of $SO(10)$. In terms of the
subgroup $SO(8)\times SO(2)$ of $SO(10)$,
we can also describe $H_2$ as $\Spin(7)\times \mu_2$,
where $\Spin(7)$ is contained in $SO(8)$ and $\mu_2$ is contained
in $SO(2)$. Thus $H_2$ is contained in $\Spin(7)\times SO(2)$.
Therefore, $H$ is contained
in $\Spin(7)\times G_m\subset G=\Spin(10)$, where the multiplicative
group $G_m$ is the inverse
image in $G$ of $SO(2)\subset SO(10)$.
It follows that
$$H^1(F,\Spin(7)\times G_m)\arrow H^1(F,G)$$
is surjective for every field $F$ over $k$.
Since every $G_m$-torsor over a field is trivial,
$$H^1(F,\Spin(7))\arrow H^1(F,G)$$
is surjective for every field $F$ over $k$.

Here $\Spin(7)$ maps into $\Spin(8)$ by the spin
representation, and then $\Spin(8)\inj G=\Spin(10)$
by the standard inclusion. By the description above
of the 8-dimensional quadratic form associated to a $\Spin(7)$-torsor
by the spin representation, it follows that
the quadratic form associated to a $G$-torsor
is of the form $H+d\langle\langle a,b,c]]$.

Every 10-dimensional
quadratic form in $I^3_q$ over a field is associated
to some $G$-torsor. So we have given another proof that
every 10-dimensional quadratic form in $I^3_q$
is isotropic. This was proved in characteristic not 2
by Pfister, and it was
extended to characteristic 2 by Baeza
and Tits, independently \cite[pp.~129-130]{Baeza},
\cite[Theorem 4.4.1(ii)]{Tits}.

Since $\Spin(7)$ has essential dimension 4, the surjectivity
above implies that $G=\Spin(10)$ has essential
dimension at most 4. To prove equality, we define a nonzero
normalized invariant
for $G$ with values in $H^{4,3}$ by the same argument
used for $\Spin(7)$. Namely, a $G$-torsor $u$ over a field $F$
over $k$ determines
a 4-fold quadratic Pfister form $$\langle\langle d,a,b,c]]$$
up to isomorphism,
and hence the element
$$f_4(u)=\{d,a,b,c]$$
in $H^{4,3}(F)$.
This completes the proof
that $G=\Spin(10)$ over $k$ has essential dimension equal to 4.
As in the previous cases, since the lower bound is proved using
a mod 2 cohomological invariant, $G$ also has 2-essential
dimension equal to 4.
\end{proof}


\small \sc UCLA Mathematics Department, Box 951555,
Los Angeles, CA 90095-1555

totaro@math.ucla.edu

\begin{thebibliography}{99}

\bibitem{BC} A.~Babic and V.~Chernousov. Lower bounds for essential
dimensions in characteristic 2 via orthogonal representations.
{\it Pac.\ J.\ Math.\ }{\bf 279 }(2015), 36--63.

\bibitem{Baeza} R.~Baeza. {\it Quadratic forms
over semilocal rings. }Lecture Notes in Mathematics
655, Springer (1978).

\bibitem{BRV} P.~Brosnan, Z.~Reichstein, and A.~Vistoli.
Essential dimension, spinor groups and quadratic forms.
{\it Ann.\ Math.\ }{\bf 171 }(2010), 533--544.

\bibitem{CM} V.~Chernousov and A.~Merkurjev.
Essential dimension of spinor and Clifford groups.
{\it Algebra Number Theory }{\bf 2 }(2014), 457--472.

\bibitem{EKM} R.~Elman, N.~Karpenko, and A.~Merkurjev.
{\it The algebraic and geometric theory of quadratic forms. }Amer.\
Math.\ Soc.\ (2008).

\bibitem{Garibaldi} S.~Garibaldi. Cohomological invariants:
exceptional groups and spin groups. {\it Mem.\
Amer.\ Math.\ Soc.\ }{\bf 200 }(2009), no.\ 937.

\bibitem{GG} S.~Garibaldi and R.~Guralnick. Spinors
and essential dimension. {\it Compos.\ Math., }to appear.

\bibitem{GN} S.~Garibaldi and D.~Nakano. Bilinear and quadratic
forms on rational modules of split reductive groups.
{\it Canad.\ J.\ Math.\ }{\bf 68 }(2016), 395--421.

\bibitem{GL} T.~Geisser and M.~Levine. The $K$-theory of fields
in characteristic $p$. {\it Invent.\ Math.\ }{\bf 139 }(2000),
459--493.

\bibitem{Izhboldinrat} O.~Izhboldin. On the cohomology groups
of the field of rational functions. {\it Mathematics
in St.\ Petersburg}, 21--44, Amer.\ Math.\ Soc.\ Transl.\ Ser.\ 2,
174, Amer.\ Math.\ Soc.\ (1996).

\bibitem{KM} N.~Karpenko and A.~Merkurjev.
Essential dimension of finite $p$-groups.
{\it Invent.\ Math.\ }{\bf 172 }(2008), 491--508.

\bibitem{KMRT} M.-A.~Knus, A.~Merkurjev, M.~Rost,
and J.-P.~Tignol. {\it The book of involutions. }Amer.\
Math.\ Soc.\ (1998).

\bibitem{Lang} S.~Lang. Algebraic groups over finite
fields. {\it Amer.\ J.\ Math.\ }{\bf 78 }(1956),
555--563.

\bibitem{Lotscher} R.~L\"otscher. A fiber dimension theorem
for essential and canonical dimension.
{\it Compos.\ Math.\ }{\bf 149 }(2013), 148--174.

\bibitem{Merkmax} A.~Merkurjev. Maximal indexes of Tits algebras.
{\it Doc.\ Math.\ }{\bf 1 }(1996), 229--243.

\bibitem{Merkess} A.~Merkurjev. Essential dimension.
{\it Quadratic forms -- algebra, arithmetic, and geometry},
299--325, Contemp.\ Math., 493, Amer.\ Math.\ Soc.\ (2009).

\bibitem{MerkBull} A.~Merkurjev. Essential dimension.
{\it Bull.\ Amer.\ Math.\ Soc.}, to appear.

\bibitem{Reichstein} Z.~Reichstein. Essential dimension.
{\it Proceedings of the International Congress of Mathematicians},
v.~II, 162--188. Hindustan Book Agency, New Delhi (2010).

\bibitem{Sekiguchi} T.~Sekiguchi. On projective normality
of abelian varieties. II. {\it J.\ Math.\ Soc.\ Japan }{\bf 29} (1977),
709--727.

\bibitem{SerreCohoGal} J.-P.~Serre.
{\it Galois cohomology. }Springer (2002).

\bibitem{Tits} J.~Tits. Strongly inner anisotropic forms
of simple algebraic groups. {\it J.\ Algebra }{\bf 131 }(1990),
648--677.

\end{thebibliography}
\end{document}